\newcommand{\Real}{\mathbb{R}}
\renewcommand{\H}{\mathcal{H}}
\renewcommand{\O}{\mathcal{O}}
\newtheorem{thm}{Theorem}[section]
\newtheorem{lem}[thm]{Lemma}
\theoremstyle{definition}
\newtheorem{defn}[thm]{Definition}
\newtheorem{rem}[thm]{Remark}
\numberwithin{equation}{section}
\author[I. Abu-Falahah]{Ibraheem Abu-Falahah}
\address{Department of Mathematics \\
	  The Hashemite University \\
	  Zarqa, Jordan}
\email{iabufalahah@hu.edu.jo}
\author[P. R. Stinga]{Pablo Ra\'ul Stinga}
\address{Departament of Mathematics\\
          The University of Texas at Austin\\
          1 University Station, C1200\\
          Austin, TX 78712-1202\\
          United States of America}
\email{stinga@math.utexas.edu}
\author[J. L. Torrea]{Jos\'e L. Torrea}
\address{Departamento de Matem\'aticas and ICMAT-CSIC-UAM-UCM-UC3M\\
          Facultad de Ciencias \\
          Universidad Au\-t\'o\-no\-ma de Madrid \\
          28049 Madrid, Spain}
\email{joseluis.torrea@uam.es}
\thanks{Research partially supported by grant MTM2011-28149-C02-01 from Gobierno de Espa\~na.}
\keywords{Pointwise almost everywhere convergence to initial data, heat equation, weighted Lebesgue space, parabolic equation, harmonic oscillator, Ornstein--Uhlenbeck operator}
\subjclass[2010]{Primary: 35K15, 35K05. Secondary: 42B37, 42B25, 42B35}
\begin{document}

\title[Almost everywhere convergence]{A note on the almost everywhere convergence \\ to initial data for some evolution equations}

\begin{abstract}
The weighted Lebesgue spaces of initial data for which almost everywhere convergence of the heat equation holds was only very recently characterized. In this note we show that the same weighted space of initial data is optimal for the heat--diffusion parabolic equations involving the harmonic oscillator and the Ornstein--Uhlenbeck operator.
\end{abstract}

\maketitle

\section{Introduction}

Let $\mathcal{L}$ be a positive second order partial differential operator on $\mathbb{R}^n$. Consider the following parabolic problem in the upper half plane
$$\left\{
  \begin{array}{ll}
    u_t(x,t) + \mathcal{L}_x u(x,t)=0, & x\in\Real^n,~0<t<T, \\
    u(x,0)=f(x), & x\in\Real^n.
  \end{array}
\right.$$
For a very general class of operators $\mathcal{L}$ it is well known that, under mild size conditions on the initial data $f$, for example $f \in L^p(\mathbb{R}^n,dx)$,   $1\le p < \infty$,  the solution $u(x,t)$ exists and the following limit property holds:
\begin{equation}\label{pesadez}
\lim_{t\rightarrow 0^+} u(x,t)=f(x),  \qquad \hbox{for almost every}~x.
\end{equation}

Consider next the following natural question.
\begin{quote}
    \textit{Given an operator $\mathcal{L}$, for which weights $v$ the solution $u$ exists up to a certain time $T>0$ and we still have $u(x,t)\to f(x)$ almost everywhere as $t\to0^+$, for every function $f \in L^p(\mathbb{R}^n,v(x)dx)$?  }
\end{quote}
In the case of the heat equation $u_t=\Delta u$, where the solution is given by
\begin{equation}
\label{kernelclasico}u(x,t)\equiv W_tf(x)=\int_{\Real^n}W_t(x-y)f(y)\,dy,\quad x\in\Real^n,~0<t<T,
\end{equation}
with $W_t(x)=(4\pi t)^{-n/2}e^{-|x|^2/(4t)}$, the answer was given very recently by S. Hartzstein, J. L. Torrea and B. Viviani in \cite{Hartzstein-Torrea-Viviani}. They found a class  $D_p^W$ of weights $v$ such that
\begin{equation}\label{W}
v\in D^W_p\quad\hbox{if and only if}\quad
\left\{
  \begin{array}{ll}
    \displaystyle u(x,t)~\hbox{exists for a time interval}~0<t<T,~\hbox{and} \\
    \displaystyle\lim_{t\to0^+}u(x,t)=f(x)~\hbox{a.e.}~x,~\hbox{for all}~f\in L^p(\Real^n,v(x)dx).
  \end{array}
\right.
\end{equation}
The class $ D_p^W$ is described in   the following   \begin{defn}[See \cite{Hartzstein-Torrea-Viviani}]\label{Def:pesos}
Let $1\leq p<\infty$. A weight $v$ (a strictly positive, locally integrable function on $\Real^n$) belongs to the class $D_p^W$ if there exists $t_0>0$ such that
$$\|W_{t_0}v^{-1/p}\|_{L^{p'}(\Real^n,dx)}<\infty.$$
\end{defn}

In this note we show that, fixed the class $D_p^W$, we can replace the solution to the heat equation $W_tf$ in the equivalence \eqref{W} above, by the solution of the evolution equations associated with the following operators:
\begin{itemize}
\item[a)] the harmonic oscillator
    $$u_t=\Delta u-|x|^2u,$$
\item[b)] the Ornstein--Uhlenbeck operator
    $$u_t=\Delta u-2x\cdot\nabla u,$$ and
\item[c)] the strongly elliptic time-dependent parabolic operator
    \begin{equation}\label{aronson}
    u_t=\partial_j(a^{ij}(x,t)\partial_iu)+b^i(x,t)\partial_iu+c(x,t)u,
    \end{equation}
    where the coefficients are bounded and measurable functions for $(x,t)\in\Real^n\times(0,T)$.
\end{itemize}

To study the almost everywhere pointwise convergence of $u$ to $f$, $f\in L^p(\Real^n)$, as $t\to0^+$, we may consider the maximal operator $\sup_{t<T}|u(x,t)|$, for some $0<T<\infty$. It turns out that for the heat equation such operator is bounded from $L^p(\Real^n)$ into weak-$L^p(\Real^n)$ and, as a consequence, \eqref{pesadez} is true (see, for example, the book by J. Duoandikoetxea \cite[Chapter~2]{Duo} and E. Damek et al. \cite{Damek}). In fact, \eqref{W} for the classical Laplacian $\mathcal{L}_x=-\Delta$ is contained in the following Theorem.

\begin{thm}[See {\cite[Theorem~2.3]{Hartzstein-Torrea-Viviani}}]\label{Thm:clasico}
Let $1\le p< \infty$. Let $v$ be a weight in $\mathbb{R}^n$. For a locally integrable
function $f$ on $\mathbb{R}^n$ we define
$$W^*_R f(x) = \sup_{t<R}|W_tf(x)|,\quad\hbox{for some}~R,~0<R<\infty.$$
The following statements are equivalent:
\begin{enumerate}[(a)]
    \item There exists  $0< R < \infty$ and a weight $w$ such that the operator
        $$f \longmapsto W^*_R f$$
        is bounded from $L^p(\mathbb{R}^n, v(x)dx)$ into $L^p(\mathbb{R}^n,w(x)dx)$ for $p>1$. In the case $p=1$, from $L^1(\mathbb{R}^n, v(x)dx)$ into weak-$L^1(\mathbb{R}^n, w(x)dx)$.
    \item There exists $0< R < \infty$ and a weight $w$ such that the operator
        $$f \longmapsto W^*_R f$$
        is bounded from $L^p(\mathbb{R}^n, v(x)dx)$  into weak-$L^p(\mathbb{R}^n, w(x)dx)$.
    \item There exists $0<R<\infty$ such that $W_Rf(x)<\infty $ for almost every $x\in\Real^n$ and the  limit
        $$\lim_{t \rightarrow 0^+} W_tf(x)$$
        exists a.e. $x\in\Real^n$, for all $f \in L^p(\mathbb{R}^n, v(x)dx)$.
    \item There exists  $0< R < \infty$ such that
        $$W^*_R f(x) < \infty,$$
        for almost every $x\in\Real^n$, for all $ f\in L^p(\mathbb{R}^n, v(x)dx)$.
    \item The weight
        $$v \in D^W_{p}.$$
\end{enumerate}
\end{thm}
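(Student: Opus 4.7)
The strategy is to establish the cycle $\mathrm{(e)} \Rightarrow \mathrm{(a)} \Rightarrow \mathrm{(b)} \Rightarrow \mathrm{(d)} \Rightarrow \mathrm{(e)}$, supplemented by the implications $\mathrm{(e)} \Rightarrow \mathrm{(c)} \Rightarrow \mathrm{(d)}$ in order to incorporate the pointwise-limit statement in (c).

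For $\mathrm{(e)} \Rightarrow \mathrm{(a)}$, I would work directly with the kernel. Given $v\in D_p^W$ with witness time $t_0$, I choose $R$ small relative to $t_0$ and apply H\"older's inequality to \eqref{kernelclasico}; using the Gaussian identity
\[
W_t(x-y)^{p'} = c_{n,p}\, t^{-n(p'-1)/2}\, W_{t/p'}(x-y),
\]
the resulting bound becomes a Gaussian integral against $v^{-p'/p}$ at time $t/p'$. A Jensen-type manipulation then allows me to relate this to $W_{t_0} v^{-1/p}$ raised to a suitable power, producing a concrete weight $w(x)$ for which $\norm{W^*_R f}_{L^p(w\,dx)} \le C \norm{f}_{L^p(v\,dx)}$. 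The step $\mathrm{(a)} \Rightarrow \mathrm{(b)}$ is immediate, and $\mathrm{(b)} \Rightarrow \mathrm{(d)}$ follows because membership in weak-$L^p(w\,dx)$, with $w$ a positive locally integrable weight, forces finiteness Lebesgue-almost everywhere.

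The main obstacle is $\mathrm{(d)} \Rightarrow \mathrm{(e)}$. The plan is to invoke a Nikishin-type extrapolation theorem for positive sublinear operators: the hypothesis that $W^*_R$ sends $L^p(\Real^n, v\,dx)$ into the space of almost-everywhere-finite functions yields a positive weight $w$ such that $W^*_R$ is bounded from $L^p(v\,dx)$ into weak-$L^p(w\,dx)$. Feeding this weak inequality with the truncated tests $f = v^{-1/p}\chi_{B_k}$ and letting $k \to \infty$, I would then use the self-adjointness of $W_{t_0}$ on $L^2$ and a duality argument to turn the resulting bound on $W^*_R(v^{-1/p})$ into the unweighted $L^{p'}$ estimate on $W_{t_0} v^{-1/p}$ defining $D_p^W$. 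The real difficulty lies in eliminating the auxiliary weight $w$ produced by Nikishin from the final estimate; this should be possible by exploiting the smoothing effect and the Gaussian decay of the heat semigroup to absorb $w$ into an unweighted bound.

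Finally, $\mathrm{(e)} \Rightarrow \mathrm{(c)}$ is a standard density argument: since $\mathrm{(e)}$ already implies $\mathrm{(a)}$, we have a maximal inequality on $L^p(v\,dx)$; together with the fact that $v \in D_p^W$ makes $C_c^\infty(\Real^n)$ a dense subset of $L^p(v\,dx)$, on which pointwise convergence $W_t f \to f$ as $t\to 0^+$ is classical, the maximal inequality propagates convergence to all $f \in L^p(v\,dx)$. The implication $\mathrm{(c)} \Rightarrow \mathrm{(d)}$ is the observation that, once $W_R f(x)<\infty$ and $\lim_{t \to 0^+} W_t f(x)$ both exist at $x$, the semigroup property makes $t \mapsto W_t f(x)$ continuous on $(0,R)$ and extend continuously to the closed interval $[0,R]$, whence $\sup_{t<R} |W_t f(x)| < \infty$ for almost every $x$.
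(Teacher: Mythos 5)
First, a point of reference: the paper does not prove Theorem \ref{Thm:clasico} at all --- it imports it from \cite[Theorem~2.3]{Hartzstein-Torrea-Viviani} --- so your proposal can only be compared with the argument there. Your overall architecture (the cycle through (a), (b), (d), (e), plus the density argument for (e)$\Rightarrow$(c) and the continuity argument for (c)$\Rightarrow$(d)) is the standard one and those easy implications are essentially fine. The two substantive implications, however, both have genuine gaps.

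For (e)$\Rightarrow$(a), pointwise H\"older applied to \eqref{kernelclasico} gives $W_R^*f(x)\le h(x)\norm{f}_{L^p(v\,dx)}$ with $h(x)=\sup_{t<R}\norm{W_t(x-\cdot)v^{-1/p}}_{L^{p'}(dy)}$, and your own Gaussian identity shows $\norm{W_t(x-\cdot)v^{-1/p}}_{L^{p'}}^{p'}=c_{n,p}\,t^{-n(p'-1)/2}\,W_{t/p'}\bigl(v^{-p'/p}\bigr)(x)$. As $t\to0^+$ the second factor tends to $v(x)^{-p'/p}>0$ at a.e.\ $x$ while the prefactor blows up, so $h\equiv+\infty$ a.e.\ (the same happens for $p=1$). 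H\"older is the right tool for a fixed $t$ and for the far-field part of the integral, but it cannot control the supremum in $t$ near the diagonal; no Jensen manipulation repairs this, since Jensen bounds $W_s(v^{-p'/p})$ from \emph{below} by $(W_s v^{-1/p})^{p'}$. The missing ingredient is a local/global splitting: for $|x|<N$ write $f=f\chi_{B_{2N}}+f\chi_{B_{2N}^c}$; the global part is handled by H\"older exactly as you propose (there $|x-y|\gtrsim|y|$ uniformly in $t<R$), while the local part must be dominated by the Hardy--Littlewood maximal function of $f\chi_{B_{2N}}$, which is finite a.e.\ because $D_p^W$ forces $v^{-p'/p}\in L^1_{\mathrm{loc}}$ and hence $f\chi_{B_{2N}}\in L^1(dx)$. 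Patching over $N$ then produces the weight $w$.

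For (d)$\Rightarrow$(e), the Nikishin route is both unnecessary and, as you yourself concede, incomplete: the step of ``eliminating the auxiliary weight $w$'' is precisely where the content lies, and testing the weak inequality on $v^{-1/p}\chi_{B_k}$ gives a right-hand side of order $|B_k|^{1/p}\to\infty$, so letting $k\to\infty$ yields nothing. The clean argument is the contrapositive via the converse of H\"older's inequality. If $v\notin D_p^W$, then for every $t>0$ one has $e^{-|y|^2/(2t)}v(y)^{-1/p}\notin L^{p'}(dy)$, so there exists $g\ge0$ with $g\in L^p(dx)$ and $\int_{\Real^n} e^{-|y|^2/(2t)}v(y)^{-1/p}g(y)\,dy=+\infty$. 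Since $|x-y|^2\le2|x|^2+2|y|^2$ gives $W_t(x-y)\ge c_t\,e^{-|x|^2/(2t)}e^{-|y|^2/(2t)}$, the function $f=gv^{-1/p}\in L^p(\Real^n,v(x)dx)$ satisfies $W_tf(x)=+\infty$ for \emph{every} $x$, hence $W_R^*f\equiv+\infty$ for every $R>t$. As $t$ is arbitrary, this contradicts (d) (and (c)) directly, with no auxiliary weight to remove.
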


Let us consider the parabolic problem \eqref{aronson} in c) above. Assume that the coefficients are bounded and measurable functions in $\Real^n\times(0,T)$ and that $a^{ij}$ is strongly elliptic. It was proved by D. Aronson in \cite{Aronson} that the fundamental solution of \eqref{aronson} satisfies upper and lower Gaussian bounds. With this it is easily seen that we can replace $W_tf$ in Theorem \ref{Thm:clasico} above by the solution of \eqref{aronson}, a fairly general parabolic equation with \textbf{bounded} coefficients. However, in the other cases a) and b) that we present, the potential $|x|^2$ and the coefficient in the drift part $2x$ are not bounded functions; in contrast, the coefficients in Aronson's result depend on $t$.

On the other hand, a naive analysis for small times of the heat kernel for the harmonic oscillator suggests that our results could be expected. Certainly there is an \textbf{upper} Gaussian bound, but the converse is not true. Here we find a very weak estimate from below that is enough for our purposes, see Lemma \ref{Lem:dale que va} in Section \ref{Section:harmonic oscillator}.

We close this circle of ideas by pointing out that the case of the Ornstein--Uhlenbeck operator may look, in a first glance, more involved due to the appearance of the underlying Gaussian measure (some partial  results were obtained by Harboure et al.  in \cite{Harboure}). We overcome this kind of difficulties by using the transference ideas of Abu-Falahah and Torrea from \cite{Abu-Falahah-Torrea}. The transference technique of \cite{Abu-Falahah-Torrea} was also shown to be useful for transferring Harnack's inequalities among solutions of fractional nonlocal equations, see \cite{Stinga-Zhang}. Moreover, we want to emphasize that the same class of weights $D_p^W$, where no Gaussian measure is involved, characterizes the almost everywhere convergence to initial data. Again our results in this case could be expected, since the underlying stochastic diffusion process (the Ornstein--Uhlenbeck process) is the classical Wiener process subject to friction. Hence one
  would think that for small times both processes behave in a similar manner. For more information about the Ornstein--Uhlenbeck 
process see P.-A. Meyer \cite{Meyer}.

A bit more general operators could be considered, like $-\Delta+|Bx|^2$ or $-\Delta+2Bx\cdot\nabla$, for some $n\times n$ positive definite symmetric constant matrix $B$. To keep a clean presentation and to avoid rather cumbersome computations we just take $B$ to be the identity matrix.

\section{The harmonic oscillator diffusion equation}\label{Section:harmonic oscillator}

Let $\H:=-\Delta+|x|^2$ be the harmonic oscillator in $\Real^n$. We denote by $W_t^{\H}f(x)\equiv e^{-t\H}f(x)$
the solution to the initial value problem
$$\left\{
  \begin{array}{ll}
    u_t=-\H u, & \hbox{in}~\Real^n,~t>0, \\
    u(x,0)=f(x), & \hbox{on}~\Real^n.
  \end{array}
\right.$$
We are also going to consider the operator $\H-n=-\Delta+|x|^2-n$ and the corresponding solution $W^{(\H-n)}_tf(x)$ to the evolution equation. It is clear that $W_t^\H f(x)=e^{-tn}W_t^{(\H-n)}f(x)$.

\begin{thm}\label{Thm:funciones}
Let $v$ be a  weight in $\mathbb{R}^n$ and $1\le p< \infty$. Given $0<R<\infty$, consider the operators
$$T^*_R f(x) = \sup_{t<R}|W^\H_tf(x)|,\qquad\tilde{T}^*_R f(x) = \sup_{t<R}|W^{(\H-n)}_tf(x)|,$$
for $f\in L^1_{\mathrm{loc}}(\Real^n)$.
The following statements are equivalent:
\begin{enumerate}[(i)]
    \item There exists  $0< R < \infty$ and a weight $w$ such that the operator
        $$f \longmapsto \tilde{T}^*_R f$$
        is bounded from $L^p(\mathbb{R}^n, v(x)dx)$ into $L^p(\mathbb{R}^n, w(x)dx)$ for $p>1$. In the case $p=1$, from $L^1(\mathbb{R}^n,v(x)dx)$ into weak-$L^1(\mathbb{R}^n,w(x)dx)$.
    \item There exists $0< R < \infty$ and a weight $w$ such that the operator
        $$f \longmapsto \tilde{T}^*_R f$$
        is bounded from $L^p(\mathbb{R}^n, v(x)dx)$  into weak-$L^p(\mathbb{R}^n, w(x)dx)$.
    \item There exists $0<R<\infty$ such that $W^{(\H-n)}_Rf(x)<\infty $ for almost every $x\in\Real^n$ and the  limit
        $$\lim_{t \rightarrow 0^+} W^{(\H-n)}_tf(x)$$
        exists a.e. $x\in\Real^n$, for all $f \in L^p(\mathbb{R}^n, v(x)dx)$.
    \item There exists  $0< R < \infty$ such that
        $$\tilde{T}^*_R f(x) < \infty,$$
        a.e. $x$, for all $ f\in L^p(\mathbb{R}^n, v(x)dx)$.
    \item In any of the statements above the operator $\tilde{T}^\ast_R$ can be replaced by $T^\ast_R$.
    \item The weight $v \in D^W_{p}$.
\end{enumerate}
\end{thm}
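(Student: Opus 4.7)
The plan is to reduce the equivalences (i)--(v) to their classical counterparts (a)--(d) in Theorem~\ref{Thm:clasico} for the Euclidean heat maximal $W^*_R$, so that the characterization (vi) falls out from the $D^W_p$ condition (e) of that theorem. The bridge will be a pair of pointwise comparisons between $W^\H_t$ and $W_t$ on $(0,R)$. First, the equivalence claimed in (v) is immediate: from $W^\H_t = e^{-nt} W^{(\H-n)}_t$ one gets $e^{-nR}\tilde T^*_R f(x) \le T^*_R f(x) \le \tilde T^*_R f(x)$ for $0 < t < R$, so $T^*_R$ and $\tilde T^*_R$ can be used interchangeably throughout; henceforth we work with $\tilde T^*_R$.

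For the ``easy'' direction (vi) $\Rightarrow$ (i), a direct inspection of Mehler's formula, using $\sinh(2t) \asymp 2t$ and $\cosh(2t)-1 = O(t^2)$ on $0<t<R$, produces the upper Gaussian bound
\[
 W^\H_t(x,y) \le C_R \, W_{c_R t}(x-y), \qquad 0<t<R,
\]
where the extra Gaussian decay of Mehler's kernel in $|x|^2+|y|^2$ is simply discarded. This yields the pointwise domination $\tilde T^*_R f(x) \le C_R' W^*_{c_R R}|f|(x)$; if $v \in D^W_p$, Theorem~\ref{Thm:clasico} supplies the boundedness of $W^*_{c_R R}$ from $L^p(\Real^n,v\,dx)$ into $L^p(\Real^n,w\,dx)$ (into weak-$L^p(\Real^n,w\,dx)$ for $p=1$), which transfers to $\tilde T^*_R$ and establishes (i). The pointwise convergence in (iii) then follows by the standard density argument: for $g \in C_c^\infty(\Real^n)$ one has $W^\H_t g(x) \to g(x)$ as $t \to 0^+$ by classical theory, and the oscillation $\limsup_{t\to 0^+} - \liminf_{t\to 0^+}$ is controlled by $2\tilde T^*_R(f-g)$.

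The implications (i) $\Rightarrow$ (ii) $\Rightarrow$ (iv) and (iii) $\Rightarrow$ (iv) are routine (the latter through the semigroup property and continuity of $t \mapsto W^{(\H-n)}_t f$), so the crux is (iv) $\Rightarrow$ (vi). Here the weak lower bound of Lemma~\ref{Lem:dale que va} enters, giving for $0<t<R$ and $x,y$ confined to a fixed ball $B_M$ a comparison of the form
\[
 W^\H_t(x,y) \ge c_{R,M}\, W_t(x-y),
\]
whence $\tilde T^*_R f(x) \ge c_{R,M} W^*_R f(x)$ for nonnegative $f\in L^p(\Real^n,v\,dx)$ supported in $B_M$ and $x \in B_M$. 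By (iv) this forces $W^*_R f < \infty$ a.e.\ on $B_M$ for all such $f$; the Nikishin--Stein principle promotes this to a local weak-type inequality for $W^*_R$, and an exhaustion argument in $M$ (with the tail $W_t(f\chi_{B_M^c})(x)$ for $x\in B_{M/2}$ handled by the rapid Gaussian decay of $W_t$) upgrades it to a global weak-type bound from $L^p(\Real^n,v\,dx)$ into $L^{p,\infty}(\Real^n,w\,dx)$ for some $w$. Theorem~\ref{Thm:clasico} then yields $v \in D^W_p$. The principal obstacle is precisely this last step: the constant $c_{R,M}$ degrades in $M$, so combining Nikishin--Stein with a careful localization/exhaustion procedure is required to pass from local almost-everywhere finiteness of the Euclidean heat maximal on compactly supported data to the global $D^W_p$ condition on $v$.
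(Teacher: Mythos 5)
Your reductions for (v), for (vi) $\Rightarrow$ (i), and for the density argument behind (iii) match the paper's strategy. The genuine gap is in (iv) $\Rightarrow$ (vi). You only extract from Lemma \ref{Lem:dale que va} a comparison $W^\H_t(x,y)\ge c_{R,M}W_t(x-y)$ with \emph{both} $x$ and $y$ confined to a ball $B_M$, and then try to globalize via Nikishin--Stein plus an exhaustion in $M$, dismissing the tail $W_t(f\chi_{B_M^c})(x)$ ``by the rapid Gaussian decay of $W_t$.'' That tail cannot be dismissed: for a general $f\in L^p(\Real^n,v\,dx)$ the integral $\int_{|y|>M}W_t(x-y)f(y)\,dy$ is finite precisely when $v$ satisfies the $D^W_p$ condition, which is the very thing you are trying to prove. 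Testing only compactly supported data, and hence only the local behaviour of $v$, can never detect a failure of the global integrability $\|W_{t_0}v^{-1/p}\|_{p'}<\infty$ at infinity, so the exhaustion step does not close. (Your (iii) $\Rightarrow$ (iv) ``by continuity of $t\mapsto W_t^{(\H-n)}f$'' also glosses over a real step, but that one is repairable by routing through Theorem \ref{Thm:clasico}.)

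The point you are missing is that Lemma \ref{Lem:dale que va} is a \emph{global} lower bound: the case analysis $|y|>4|x|$ versus $|y|\le 4|x|$ shows that the extra factor $s|x+y|^2$ in Mehler's kernel is either absorbed into a slightly faster Gaussian in $|x-y|$ (far regime) or bounded by $25s|x|^2$, a quantity depending on $x$ only (near regime). This yields, for every $x\in\Real^n$ and every nonnegative $f$ with no support restriction,
\[
W_{\frac{9s}{9+25s^2}}f(x)\le (34/9)^{n/2}(1-s^2)^{-n/2}e^{\frac{25}{4}s|x|^2}\,W^\H_{t(s)}f(x),
\]
where the constant is finite for each fixed $x$. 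Hence a.e.\ finiteness of $\sup_{t<R}W^\H_tf(x)$ immediately gives a.e.\ finiteness of $\sup_{0<s<S_R}W_sf(x)$ for all $f\in L^p(\Real^n,v\,dx)$, and Theorem \ref{Thm:clasico} (d) $\Rightarrow$ (e) applies directly --- no Nikishin--Stein principle, no localization, no exhaustion. Replace your local comparison by this global one and the ``principal obstacle'' you identify disappears.
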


To prove the theorem we have to handle the fundamental solution to the
harmonic oscillator diffusion equation. Let us recall that
$$W_t^{\H}f(x)=\int_{\Real^n}W_t^\H(x,y)f(y)\,dy=\int_{\Real^n}\frac{e^{-[\frac{1}{2}|x-y|^2\coth2t+x\cdot y\tanh t]}}{(2\pi\sinh 2t)^{n/2}}f(y)\,dy,$$
see the book by S. Thangavelu \cite[(4.1.2)--(4.1.3)]{Thangavelu}. By applying Stefano Meda's change of parameters
\begin{equation}\label{Meda}
t=\frac{1}{2}\log\frac{1+s}{1-s},\quad t\in(0,\infty),~s\in(0,1),
\end{equation}
we arrive to
\begin{equation}\label{kernel funciones}
W_{t(s)}^\H f(x)=\int_{\Real^n}W_{t(s)}^\H(x,y)f(y)\,dy=\int_{\Real^n}\left(\frac{1-s^2}{4\pi s}\right)^{n/2}e^{-\frac{1}{4}[s|x+y|^2+\frac{1}{s}|x-y|^2]}f(y)\,dy.
\end{equation}

\begin{rem}\label{rem:obvio}
Note that, in \eqref{Meda}, $s\to0^+$ if and only if $t\to0^+$. Moreover, it is clear from \eqref{kernel funciones} that for nonnegative functions $f$ we have $W^\H_{t(s)}f(x)\leq (1-s^2)^{n/2}W_sf(x)$.
\end{rem}

\begin{lem}\label{Lem:dale que va}
Let $f$ be a nonnegative function. Then, for any $x\in\Real^n$ and $0<s<1$,
$$(1-s^2)^{n/2}W_{\frac{9s}{9+25s^2}}f(x)\leq(34/9)^{n/2}e^{\frac{1}{4}s25|x|^2}W^\H_{t(s)}f(x).$$
\end{lem}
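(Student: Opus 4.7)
Because $f\geq 0$, I would try to establish the lemma by proving the pointwise kernel inequality
$$(1-s^2)^{n/2}W_{\frac{9s}{9+25s^2}}(x-y) \leq (34/9)^{n/2}e^{\tfrac{25s}{4}|x|^2}W^{\H}_{t(s)}(x,y)$$
for every $x,y\in\Real^n$ and $s\in(0,1)$, and then integrating against $f$. Writing $W_\tau(x-y) = (4\pi\tau)^{-n/2}e^{-|x-y|^2/(4\tau)}$ and using the explicit formula \eqref{kernel funciones} for $W^\H_{t(s)}(x,y)$, the $(1-s^2)^{n/2}$ and $(4\pi)^{-n/2}$ factors cancel, so the inequality splits cleanly into a prefactor part and an exponent part.

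The prefactor part reduces to $(s/\tau)^{n/2}\leq(34/9)^{n/2}$, where $\tau = 9s/(9+25s^2)$. The specific choice of $\tau$ gives $s/\tau = 1+25s^2/9$, which lies in $[1,34/9]$ for $s\in(0,1)$, so this part is immediate. The same choice is engineered to produce $1/\tau - 1/s = 25s/9$, so the exponent part reduces to
$$-\tfrac{25s}{36}|x-y|^2 \;\leq\; \tfrac{25s}{4}|x|^2 - \tfrac{s}{4}|x+y|^2.$$
Expanding $|x\pm y|^2 = |x|^2 \pm 2x\cdot y + |y|^2$ and clearing the positive factor $s/36$, this becomes the quadratic form inequality
$$241|x|^2 - 68\,x\cdot y + 16|y|^2 \;\geq\; 0.$$

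The crux of the argument is thus the positivity of this quadratic form, and this is the only step that could fail. Applying Cauchy--Schwarz ($x\cdot y \leq |x||y|$), it suffices to check that $241 a^2 - 68 ab + 16 b^2 \geq 0$ for all $a,b\geq 0$, which follows from the discriminant inequality $68^2 = 4624 \leq 15424 = 4\cdot 241\cdot 16$. This also explains why the specific constants $9$ and $25$ (and hence $34/9$) appear in the statement: they are exactly what makes the discriminant negative with a comfortable margin. Integrating the pointwise kernel inequality against $f\geq 0$ concludes the proof.
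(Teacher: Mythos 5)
Your proposal is correct, and it follows the same overall strategy as the paper (a pointwise comparison of the kernels in \eqref{kernelclasico} and \eqref{kernel funciones}, then integration against $f\geq 0$), but the key step is executed quite differently. The paper splits $\Real^n$ into the two regions $|y|>4|x|$ and $|y|\leq 4|x|$: in the far region it uses the triangle inequality to get $|x+y|\leq\frac{5}{3}|x-y|$ and absorbs the term $s|x+y|^2$ into $\frac{25}{9}s|x-y|^2$, while in the near region it bounds $|x+y|\leq 5|x|$ and pays for it with the factor $e^{\frac{25s}{4}|x|^2}$; the specific time $\frac{9s}{9+25s^2}$ and the constant $(34/9)^{n/2}$ are then read off exactly as in your prefactor computation. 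You instead verify the exponent inequality globally, reducing it (after using $1/\tau-1/s=25s/9$) to the positive semidefiniteness of $241|x|^2-68\,x\cdot y+16|y|^2$, settled by Cauchy--Schwarz and the negative discriminant $68^2-4\cdot 241\cdot 16<0$. Your route avoids the case analysis entirely, is a single clean algebraic verification, and makes transparent that the constants $9$, $25$, $34/9$ are not sharp (the discriminant has a comfortable margin); the paper's route is more geometric and perhaps makes clearer \emph{why} the weight $e^{\frac{25s}{4}|x|^2}$ is only needed near $x$. Both are complete proofs of the stated inequality.
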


\begin{proof}
We just have to compare the kernels in (\ref{kernelclasico}) and \eqref{kernel funciones}. Let us consider two cases.

\texttt{Case 1.} $|y|>4|x|$. Then $|x-y|\leq|x|+|y|\leq\frac{5}{4}|y|$ and $|y|\leq|x-y|+\frac{1}{4}|y|$, so that $\frac{3}{4}|y|\leq|x-y|\leq\frac{5}{4}|y|$. Analogously, $\frac{3}{4}|y|\leq|x+y|\leq\frac{5}{4}|y|$. Consequently, $|x+y|\leq\frac{5}{3}|x-y|$. Hence, in this case,
$$e^{-\frac{1}{4}[s|x+y|^2+\frac{1}{s}|x-y|^2]}\geq e^{-\frac{1}{4}[\frac{25}{9}s|x-y|^2+\frac{1}{s}|x-y|^2]}=e^{-\left(\frac{25s^2+9}{9s}\right)\frac{|x-y|^2}{4}}.$$

\texttt{Case 2.} $|y|\leq4|x|$. Then $0\leq|x+y|\leq5|x|$ and, since $\frac{1}{s}\leq\frac{1}{s}+\frac{25s}{9}$, we get
$$e^{-\frac{1}{4}[s|x+y|^2+\frac{1}{s}|x-y|^2]}\geq e^{-\frac{1}{4}s25|x|^2}e^{-\frac{1}{s}\frac{|x-y|^2}{4}}\geq e^{-\frac{1}{4}s25|x|^2}e^{-\left(\frac{25s^2+9}{9s}\right)\frac{|x-y|^2}{4}}.$$
The result then follows from the two estimates above and by noticing that for every $0<s<1$ we have $\left(\frac{9s}{9+25s^2}\right)^{-n/2}\leq(34/9)^{n/2}s^{-n/2}$.
\end{proof}

\begin{proof}[Proof of Theorem \ref{Thm:funciones}]
By taking into account Remark \ref{rem:obvio} and the change of parameters \eqref{Meda},
\begin{align*}
    W^\H_{t(s)}f(x) &\leq e^{nt(s)}W^\H_{t(s)}|f|(x) = W^{(\H-n)}_{t(s)}|f|(x) \leq e^{nt(s)}(1-s^2)^{n/2}W_s|f|(x) \\
     &= \left(\frac{1+s}{1-s}\right)^{n/2}(1-s^2)^{n/2}W_s|f|(x)\leq 2^nW_s|f|(x).
\end{align*}
This last chain of inequalities and Theorem \ref{Thm:clasico} give $(vi)\Longrightarrow(i)$. The implications $(i)\Longrightarrow(ii)\Longrightarrow(iii)$ are obvious, just notice that the set of continuous functions $\psi$ with compact support are dense in $L^p(\Real^n,v(x)\,dx)$
and it is also well-known that $\lim_{t\to0^+}W_t^{(\H-n)}\psi(x)=\lim_{t\to0^+}e^{tn}W_t^{\H}\psi(x)=\psi(x)$, see \cite[p.~85]{Thangavelu}.

$(iii)\Longrightarrow(iv)$. It is enough to consider $f$ nonnegative. Let $x$ be a point such that $W^{(\H-n)}_Rf(x)<\infty$. By Lemma \ref{Lem:dale que va} there exists $0<s_R<1$ (given by $R=\frac{1}{2}\log\frac{1+s_R}{1-s_R}$) such that
$$W_{\frac{9s_R}{9+25s_R^2}}f(x)<\infty.$$
For $0<s<1$, let $s^\ast:=\frac{9s}{9+25s^2}$. Using Remark \ref{rem:obvio} and Lemma \ref{Lem:dale que va},
\begin{equation}
\begin{aligned}\label{26}
    W_{t(s^\ast)}^{(\H-n)}f(x) &= e^{nt(s^*)}W_{t(s^*)}^\H f(x) \leq \left(\frac{1+s^\ast}{1-s^\ast}\right)^{n/2}\left(1-(s^*)^2\right)^{n/2}W_{s^*}f(x) \\
     &= (1+s^*)^n\left(1-s^2\right)^{-n/2}\left(1-s^2\right)^{n/2}W_{s^*}f(x)  \\
     &\leq 2^n\left(1-s^2\right)^{-n/2} (34/9)^{n/2}e^{\frac{1}{4}s25|x|^2}W_{t(s)}^\H f(x).
\end{aligned}
\end{equation}
As the limit in $(iii)$ exists, we have $\lim_{t\to0^+}W_t^{(\H-n)}f(x)=\lim_{t\to0^+}e^{tn}W_t^\H f(x)$. Hence the chain of inequalities \eqref{26} implies that $\lim_{t\to0^+}W_tf(x)$ exists. Applying Theorem \ref{Thm:clasico}$(c)\Longrightarrow(d)$ we obtain that there exists $0<R<\infty$ such that $W^*_Rf(x)<\infty $ a.e. $x\in\Real^n$. Using again Remark \ref{rem:obvio} we get $(iv)$.

By considering the computations above, $(iv)\Longrightarrow(v)$ is obvious.

Let us finally prove $(v)\Longrightarrow(vi)$. Clearly it is enough to prove that $(iv)$ for the operator $T_R^\ast$ implies $(vi)$. The chain of inequalities in \eqref{26} ensures that there exists a certain $S_R$ such that $\sup_{0<s<S_R}W_sf(x)<\infty$ a.e. $x\in\Real^n$. Therefore we can apply again Theorem \ref{Thm:clasico}$(d)\Longrightarrow(e)$ to have $(vi)$.
\end{proof}

\section{The Ornstein--Uhlenbeck diffusion equation}

In this section we let $\O=-\Delta+2x\cdot\nabla$, the Ornstein--Uhlenbeck operator in 
$\Real^n$, and denote by $W_t^{\O}f(x)\equiv e^{-t\O}f(x)$ the solution to the initial value problem
$$\left\{
  \begin{array}{ll}
    u_t=-\O u, & \hbox{in}~\Real^n,~t>0, \\
    u(x,0)=f(x), & \hbox{on}~\Real^n.
  \end{array}
\right.$$
We denote by $d\gamma(x)=\pi^{-n/2}e^{-|x|^2}dx$, the Gaussian measure on $\Real^n$. The heat kernel for the Ornstein--Uhlenbeck operator is given by Mehler's formula, 
see \cite{Thangavelu}. Instead of working with the kernel we will take advantage of the transference method that was systematically
developed in \cite{Abu-Falahah-Torrea}. The analysis in \cite{Abu-Falahah-Torrea} is carried out by using the isometry $U:L^2(\Real^n,d\gamma(x))\to L^2(\Real^n,dx)$,
defined by $f(x)\longmapsto Uf(x)=\pi^{-n/4}e^{-\frac{|x|^2}{2}}f(x)$, 
see \cite[pp.~414--415,~2.44]{Kamke} and \cite[Lemma~3.1]{Abu-Falahah-Torrea}. It is shown in \cite[Proposition~3.3]{Abu-Falahah-Torrea} that
\begin{equation}\label{transferencia}
U^{-1}W_t^{(\H-n)}Uf(x)=W_t^\O f(x),\quad\hbox{for any polynomial}~f~\hbox{on}~\Real^n.
\end{equation}

\begin{thm}\label{Thm:polinomios}
Let $v$ be a weight in $\mathbb{R}^n$ and  $1\le  p< \infty$. Given $0< R < \infty$, consider the operator
$$\O_R^\ast f(x) = \sup_{t<R}|W_t^{\mathcal{O}}f(x)|,$$
for $f\in L^1_{\mathrm{loc}}(\Real^n)$.
The following statements are equivalent:
\begin{enumerate}[(1)]
    \item There exists  $0< R < \infty$ and a weight $w$ such that the operator
        $$f \longmapsto \O^*_R f$$
        is bounded from $L^p(\mathbb{R}^n, v(x)d\gamma(x))$ into $L^p(\mathbb{R}^n, w(x)d\gamma(x))$ for $p>1$. In the case $p=1$, from $L^1(\mathbb{R}^n,v(x)d\gamma(x))$ into weak-$L^1(\mathbb{R}^n,w(x)d\gamma(x))$.
    \item There exists $0< R < \infty$ and a weight $w$ such that the operator
        $$f \longmapsto \O^*_R f$$
        is bounded from $L^p(\mathbb{R}^n, v(x)d\gamma(x))$  into weak-$L^p(\mathbb{R}^n, w(x)d\gamma(x))$.
    \item There exists $0<R<\infty$ such that $W^{\O}_Rf(x)<\infty $ for almost every $x\in\Real^n$ and the  limit
        $$\lim_{t \rightarrow 0^+} W^{\O}_tf(x)$$
        exists a.e. $x\in\Real^n$, for all $f \in L^p(\mathbb{R}^n, v(x)d\gamma(x))$.
    \item There exists  $0< R < \infty$ such that
        $$\O^*_R f(x) < \infty,$$
        a.e. $x$, for all $ f\in L^p(\mathbb{R}^n, v(x)d\gamma(x))$.
    \item The weight $v \in D^W_{p}$.
\end{enumerate}
\end{thm}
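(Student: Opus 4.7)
The strategy is to transfer the Ornstein--Uhlenbeck problem to the harmonic oscillator setting via the intertwining relation \eqref{transferencia}, then apply Theorem \ref{Thm:funciones} together with the invariance of the class $D_p^W$ under Gaussian multipliers. First I would extend \eqref{transferencia} beyond polynomials by a direct kernel calculation. Comparing Mehler's formula for $W_t^\O$ with the Hermite kernel of $W_t^\H$ recalled just before Lemma \ref{Lem:dale que va}, and using $\coth 2t - \tanh t = 1/\sinh 2t$ together with $1-e^{-4t} = 2 e^{-2t}\sinh 2t$, one obtains the pointwise identity
$$W_t^\O(x,z) = e^{(|x|^2 - |z|^2)/2}\, W_t^{(\H-n)}(x,z),$$
so that $W_t^\O f(x) = \pi^{n/4}e^{|x|^2/2}\, W_t^{(\H-n)}(Uf)(x)$ whenever the integrals converge absolutely, and in particular $\O_R^\ast f(x) = \pi^{n/4}e^{|x|^2/2}\,\tilde T_R^\ast(Uf)(x)$ for nonnegative $f$. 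An elementary computation then shows that $U$ induces a linear bijection (an isometry up to a constant depending on $n,p$) between $L^p(\Real^n, v\,d\gamma)$ and $L^p(\Real^n, \tilde v\, dx)$ with the Gaussian-shifted weight $\tilde v(x) = v(x)\, e^{(p-2)|x|^2/2}$.

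I would then prove the equivalences by the cycle $(5)\Rightarrow(1)\Rightarrow(2)\Rightarrow(3)\Rightarrow(4)\Rightarrow(5)$. For $(5)\Rightarrow(1)$, the Gaussian invariance of $D_p^W$ below gives $\tilde v \in D_p^W$; Theorem \ref{Thm:funciones}$(vi)\Rightarrow(i)$ then yields boundedness of $\tilde T_R^\ast$ from $L^p(\tilde v\,dx)$ into $L^p(\tilde w\,dx)$ for some $\tilde w$; transferring back through $U$ and the multiplier $\pi^{n/4}e^{|x|^2/2}$ produces boundedness of $\O_R^\ast$ from $L^p(v\,d\gamma)$ into $L^p(w\,d\gamma)$ with $w = \tilde w\, e^{-(p-2)|x|^2/2}$. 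The implications $(1)\Rightarrow(2)\Rightarrow(3)\Rightarrow(4)$ are standard, exactly as in the proof of Theorem \ref{Thm:funciones}; the step $(2)\Rightarrow(3)$ uses that continuous functions with compact support are dense in $L^p(v\,d\gamma)$ and satisfy $W_t^\O \psi(x) \to \psi(x)$, which follows from \eqref{transferencia} and the corresponding property for $W_t^{(\H-n)}$ in \cite[p.~85]{Thangavelu}. For $(4)\Rightarrow(5)$, the transference identity combined with the fact that $e^{|x|^2/2}$ is finite everywhere shows that $\tilde T_R^\ast(Uf)$ is a.e.\ finite for every $f \in L^p(v\,d\gamma)$, i.e.\ $\tilde T_R^\ast$ satisfies condition (iv) of Theorem \ref{Thm:funciones} on $L^p(\tilde v\, dx)$; hence $\tilde v \in D_p^W$, and by invariance $v \in D_p^W$.

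The auxiliary ingredient is the Gaussian invariance of $D_p^W$: $v \in D_p^W \iff v\,e^{c|x|^2} \in D_p^W$ for every $c \in \Real$. Writing out Definition \ref{Def:pesos} as an integrability condition of Gaussian type for $v^{-p'/p}$, multiplying $v$ by $e^{c|x|^2}$ only shifts the Gaussian exponent by $c p'/p$, a shift absorbed by adjusting the parameter $t_0$. Applied with $c = (p-2)/2$, this gives $\tilde v \in D_p^W \iff v \in D_p^W$ and closes the argument. I expect the main technical obstacle to be the careful bookkeeping of weights in the transfer of the boundedness inequality through the isometry $U$ and the multiplier $e^{|x|^2/2}$; the Gaussian invariance of $D_p^W$ is the conceptual reason why the unweighted condition $v \in D_p^W$ --- the same condition as for the pure Lebesgue heat equation --- remains the correct characterization even though the Gaussian measure $d\gamma$ appears on the initial-data side.
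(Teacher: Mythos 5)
Your proposal is correct and follows essentially the same route as the paper: conjugate by the isometry $U$ to reduce to Theorem \ref{Thm:funciones} for the shifted weight $v(x)e^{|x|^2 p(-\frac1p+\frac12)}$, and use that membership in $D_p^W$ is unaffected by Gaussian factors (which the paper phrases as being free to take the parameter $M$ as large as needed). Your only additions --- verifying the transference identity at the kernel level so that it applies beyond polynomials, and isolating the Gaussian invariance of $D_p^W$ as a separate lemma --- are presentational refinements of steps the paper uses implicitly.
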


\begin{proof}
The implications \textit{(1)}$\Longrightarrow$\textit{(2)}$\Longrightarrow$\textit{(3)} are obvious.

Let us prove \textit{(3)}$\Longrightarrow$\textit{(4)}. Assume that $f$ is nonnegative. By \eqref{transferencia}, $\lim_{t\to0^+}W_t^{(\H-n)}Uf(x)$ exists for a.e. $x$, for all $f\in L^p(\mathbb{R}^n, v(x)d\gamma(x))$. This is equivalent to saying that $\lim_{t\to0^+}W_t^{(\H-n)}g(x)$ exists a.e for every function $g\in L^p(\mathbb{R}^n,v(x)e^{|x|^2p(-\frac1{p}+\frac12)}dx)$. Moreover, $W_R^{(\mathcal{H}-n)}Uf(x)<\infty$ for a certain $R$. By Theorem \ref{Thm:funciones}$(iii)\Longrightarrow(iv)$ we know that this implies the existence of some $R$ such that the maximal operator $\sup_{t<R}|W_t^{(\H-n)}g(x)|$ is finite almost everywhere, for any function $g\in L^p(\mathbb{R}^n,v(x)e^{|x|^2p(-\frac1{p}+\frac12)}dx)$. Thus the maximal operator $\sup_{t<R}|W_t^{\O}f(x)|$ is finite almost everywhere.

To check \textit{(4)}$\Longrightarrow$\textit{(5)} let us first observe that, by proceeding as above, we see that Theorem \ref{Thm:funciones}$(iv)\Longrightarrow(vi)$ gives that the weight $v(x)e^{|x|^2p(-\frac1{p}+\frac12)}$ belongs to the class $D_p^W$. That is, there exists $M>0$ such that
$$\int_{\Real^n}\Big[ e^{-M|x|^2}\left(v(x)e^{|x|^2p(-\frac1{p}+\frac12)}\right)^{-1/p}\Big]^{p'}\,dx<\infty,$$
when $p>1$, and the corresponding $L^\infty$ bound for $p=1$. It is clear from here that the value of $M$ (which corresponds to $1/t$ in the classical heat kernel) can be chosen as large as we want. In particular, we can take any $M>-\left(-\frac{1}{p}+\frac{1}{2}\right)$.  Then we see that the weight $v$ satisfies Definition \ref{Def:pesos}.

\textit{(5)}$\Longrightarrow$\textit{(1)}. If the weight $v$ satisfies the condition in Definition \ref{Def:pesos} for some $M>0$ then
$$\int_{\Real^n}\Big[e^{-M|x|^2}(v(x))^{-1/p}\Big]^{p'}\,dx<\infty.$$
By choosing $M>\left(-\frac1{p}+\frac12\right)$ we have that
$$\int_{\Real^n}\Big[e^{-\left(M-\left(-\frac1p+\frac12\right)\right)|x|^2}\left(v(x)e^{|x|^2p \left(-\frac{1}{p}+\frac12\right)}\right)^{-1/p}\Big]^{p'}\,dx<\infty.$$
In other words, the weight $v(x)e^{|x|^2p\left(-\frac1{p}+\frac12\right)}$ satisfies Definition \ref{Def:pesos} and therefore $(i)$ of Theorem \ref{Thm:funciones} holds for the maximal operator $\tilde{T}^\ast_R$. Proceeding as in the proof of \textit{(3)}$\Longrightarrow$\textit{(4)} we see that this implies the boundedness of the operator $\O_R^\ast f(x)$ from $L^p(\Real^n,v(x)d\gamma(x))$ into $L^p(\Real^n,w(x)d\gamma(x))$ for some weight $w$. The case $p=1$ follows analogously.
\end{proof}




\begin{thebibliography}{mm}

\bibitem{Abu-Falahah-Torrea} I. Abu-Falahah and J. L. Torrea,
{Hermite function expansions versus Hermite polynomial expansions},
\textit{Glasgow Math. J.}
\textbf{48} (2006), 203--215.

\bibitem{Aronson} D. G. Aronson,
{Bounds for the fundamental solution of a parabolic equation},
\textit{Bull. Amer. Math. Soc.}
\textbf{73} (1967), 890--896.

\bibitem{Damek} E. Damek, G. Garrig\'os, E. Harboure and J. L. Torrea,
{Weighted inequalities and a.e. convergence for Poisson integrals in light-cones},
\textit{Math. Ann.}
\textbf{336} (2006), 727--746.

\bibitem{Duo} J. Duoandikoetxea,
\textit{Fourier Analysis},
Graduate Studies in Mathematics \textbf{29},
American Mathematical Society,
Providence, RI, 2001.

\bibitem{Harboure} E. Harboure, J. L. Torrea and B. Viviani,
{On the search for weighted inequalities for operators related to the Ornstein-Uhlenbeck semigroup}, 
\textit{Math. Ann.}
\textbf{318} (2000), 341--353.

\bibitem{Hartzstein-Torrea-Viviani} S. I. Hartzstein, J. L. Torrea and B. E. Viviani,
{A note on the convergence to initial data of Heat and Poisson equations},
\textit{Proc. Amer. Math. Soc.}
\textbf{141} (2013), 1323--1333.

\bibitem{Kamke} E. Kamke,
\textit{Differentialgleichungen L\"osungsmethoden und L\"osungen}, (German),
Chelsea Publishing Co.,
New York, NY, 1948.

\bibitem{Meyer} P.-A. Meyer,
{Note sur les processus d'Ornstein-Uhlenbeck}, (French),
\textit{Seminar on Probability, XVI},
in: Lecture Notes in Mathematics \textbf{920}, 95--133,
Springer,
Berlin-New York, 1982.

\bibitem{Stinga-Zhang} P. R. Stinga and C. Zhang,
{Harnack's inequality for fractional nonlocal equations},
\textit{Discrete Contin. Dyn. Syst.}
\textbf{33} (2013), 3153--3170.

\bibitem{Thangavelu} S. Thangavelu,
\textit{Lectures on Hermite and Laguerre Expansions},
Mathematical Notes \textbf{42},
Princeton University Press,
Princeton, NJ, 1993.

\end{thebibliography}
\end{document}